\theoremstyle{definition}
\newtheorem{defn}[equation]{Definition}
\newtheorem{defn'}[equation]{Definition'}
\theoremstyle{plain}
\newtheorem{thm}[equation]{Theorem}
\newtheorem{prop}[equation]{Proposition}
\newtheorem{conj}[equation]{Conjecture}
\theoremstyle{remark}
\newcommand{\Z}{\mathbb{Z}}
\newcommand{\R}{\mathbb{R}}
\newcommand{\C}{\mathbb{C}}
\newcommand{\pt}{\mathrm{pt}}
\newcommand{\del}{\partial}
\newcommand{\dR}{\mathrm{dR}}
\newcommand{\Bord}{\mathrm{Bord}}
\newcommand{\cw}{\mathrm{cw}}
\newcommand{\hBord}{h\Bord^{G_\nabla}}
\newcommand{\HS}{\mathrm{HS}}
\newcommand{\CS}{\mathrm{CS}}
\newcommand{\clo}{\mathrm{clo}}
\newcommand{\FL}{\mathrm{FL}}
\DeclareMathOperator{\Hom}{Hom}
\def\beq#1\eeq{\begin{align}#1\end{align}}
\begin{document}

\title[Invertible QFTs and differential Anderson duals]{Invertible QFTs and differential Anderson duals}
\author[M. Yamashita]{Mayuko Yamashita}
\address{Department of Mathematics, Kyoto University, 
606-8502, Kyoto, Japan}
\email{yamashita.mayuko.2n@kyoto-u.ac.jp}
\subjclass[]{}
\maketitle

\begin{abstract}
This is the proceeding of a talk given at Stringmath 2022. 
We introduce a Cheeger-Simons type model for the differential extension of Anderson dual to generalized homology theory with physical interpretations. 
This construction generalizes the construction of the differential Anderson dual to bordism homology theories, given in a previous work of Yonekura and the author \cite{YamashitaYonekura2021}. 
\end{abstract}

\section{Introduction}

In this article, we introduce a Cheeger-Simons type model for the differential extension of Anderson dual to generalized homology theory with physical interpretations. 
This construction generalizes the construction of the differential Anderson dual to bordism homology theories, given in a previous work of Yonekura and the author \cite{YamashitaYonekura2021}. 

The Anderson duals have been important in the recent study of quantum field theories (QFTs) in terms of algebraic topology. 
The Anderson duals to bordism homology theories are conjectured to classify invertible QFTs by Freed and Hopkins \cite{Freed:2016rqq}. 
Based on this conjecture, Yonekura and the author gave a model $I\Omega^G_\dR$ for the Anderson dual to $G$-bordism homology theory with a physical interpretation \cite{YamashitaYonekura2021}. 
Interestingly, it turns out that the natural object produced from QFTs is a {\it differential refinement} of it, and passing to topological group corresponds to taking deformation classes. 
The construction has a similarity to the Cheeger-Simons' model of differential ordinary cohomology in terms of {\it differential characters} \cite{CheegerSimonsDiffChar}. 

In this article, we explain this construction in a generalized form. 
We provide a Cheeger-Simons' type models of the Anderson dual to generalized homology theories. 
More precisely, we construct a differential extension $\widehat{IE}^*_\dR$ of the Anderson dual $IE^*$ to a spectrum $E$, provided that a differential {\it homology} $\widehat{E}_*$ is given. 

Differential cohomology theories have been much studied, but it seems that the homology version has not been interested so much. 
Elmrabty \cite{ElmrabtyDiffKhom} defines differential $K$-homology, and as far as the author know it is the only example existing explicitly in the literature. 
But certainly there are situations where we want to refine generalized homology in a differential way, for example in the case of bordism homology theories. 
Such situations also naturally arise in physics; for example D-branes fits into the framework of differential $K$-homology. 
Differential bordism theories naturally arise in (non-topological) QFT's. 
Thus the author feels that it is valuable to have a general framework for differential refinements in the homology settings, as we do in Subsection \ref{sec_diffhom}. 

Given a differential homology theory $\widehat{E}_*$, the corresponding differential Anderson dual cohomology $\widehat{IE}^*_\dR$ consists of pairs $(\omega, h)$, where $h \colon \widehat{E}_{n-1} \to \R/\Z$ is a group homomorphism which satisfies a compatibility with a closed differential form $\omega \in \Omega^n(X; E^\bullet(\pt) \otimes \R)$. 
Applied to $E = MTG$, we recover the model $\widehat{I\Omega^G_\dR}$ in \cite{YamashitaYonekura2021}, where
the homomorphism $h$ is regarded as the complex phase of the {\it partition function} of an invertible QFT. 
Applied to $E = H\Z$, we recover the Cheeger-Simons' differential character groups (note that we have the Anderson self-duality $IH\Z \simeq H\Z$). 

This article is organized as follows. 
The first two sections are devoted to preliminaries. 
We give a brief introduction to differential cohomology theories in Section \ref{sec_diffcoh}. We explain invertible QFTs and Anderson duals in Section \ref{sec_QFT}. 
In Section \ref{sec_diffhom}, we introduce differential homology theories, and in Section \ref{sec_IE} we construct the differential Anderson duals. 
In Section \ref{sec_interpretation} we explain the physical interpretation of our model. 

\subsection{Notations and Conventions}
In this article we use differential forms and currents. 
\begin{itemize}
    \item $\Omega^*(X) = \Omega^*(X; \R)$ denotes the space of differential forms on $X$ with coefficients in $\R$. 
    $\Omega_*(X) := \Hom_{\mathrm{conti}} (\Omega^{*}(X), \R)$ denotes the space of compactly supported currents on $X$. 
    \item If $V_\bullet$ is a graded vector space over $\R$, we topologize it as the direct limit of all finite subspaces and define $\Omega^*(X; V_\bullet)$. 
    We also set
    \begin{align*}
        \Omega_n(X; V_\bullet) := \oplus_{p+q = n} \Hom_{\mathrm{conti}} (\Omega^{p}(X),V_q). 
    \end{align*}
    \item The de Rham chain complex is denoted as
    \begin{align*}
        \cdots \xrightarrow{\del} \Omega_{n+1}(X; V_\bullet) \xrightarrow{\del} \Omega_{n}(X; V_\bullet)\xrightarrow{\del}  \cdots. 
    \end{align*}
    \item For a spectrum $E$, we set $V_\bullet^E = E_\bullet(\pt) \otimes \R$ and $V^\bullet_E := E^\bullet(\pt) \otimes \R$, so that $V_n^E = V^{-n}_E$. 
    The homology Chern-Dold character for $E_*$ is denoted by
    \begin{align*}
        \mathrm{ch} \colon E_*(-) \to H_*(-; V_\bullet^E) . 
    \end{align*}
  
\end{itemize}

\section{Background : Differential cohomology}\label{sec_diffcoh}

In this section we give a brief review of generalized differential cohomology theories. 
A {\it differential extension} of a generalized cohomology theory $E^*$ is a refinement $\widehat{E}^*$ of the restriction of $E^*$ to the category of smooth manifolds, which containes differential-geometric data. 

\subsection{Ordinary differential cohomology}

\subsection{$\widehat{H}^2(X; \Z)$}

In order to illustrate the idea of differential cohomology, first we explain a model of the second ordinary differential cohomology. 
Recall that $H^2(X; \Z)$ is identified as the group of isomorphism classes of complex line bundles over a space $X$. 
The differential group is defined by adding differential information on it. 
\begin{defn}\label{def_hatH2}
    For a manifold $X$, we define $\widehat{H}_{\mathrm{geom}}^2(X; \Z)$ to be the abelian group of isomorphism classes of hermitian line bundles with unitary connections over $X$. 
\end{defn}

The differential group is a refinement of the topological group, in the sense that there is a surjection (forgetful map)
\begin{align}
    I \colon \widehat{H}_{\mathrm{geom}}^2(X; \Z) \to {H}^2(X; \Z), \quad [L, \nabla] \mapsto [L]. 
\end{align}
But $\widehat{H}_{\mathrm{geom}}^2$ has more information than $H^2$. We can extract the differential information by taking the curvature, 
\begin{align}
    R \colon \widehat{H}_{\mathrm{geom}}^2(X; \Z) \to \Omega^2_\clo(X), \quad [L, \nabla] \mapsto F_\nabla/(2\pi \sqrt{-1}). 
\end{align}
Connections on topologically trivial line bundles are identified with one form. This gives a map
\begin{align}
    a \colon \Omega^1(X) \to \widehat{H}_{\mathrm{geom}}^2(X; \Z), \quad \alpha \mapsto [\underline{\C}, d+ 2\pi \sqrt{-1}\alpha]. 
\end{align}
These maps are part of the {\it differential cohomology hexagon}, 
\begin{align*}
        \xymatrix@R = 6pt@C=10pt{
        0 \ar[rd] & & & & 0 \\
        & H^{1}(X; \R/\Z)\ar[rr]^-{\mathrm{Bock}} \ar[rd] & & H^2(X; \Z) \ar[ru] \ar[rd]^-{\otimes \R}& \\
        H^{1}(X; \R)\ar[ru] \ar[rd] & & {\widehat{H}_{\mathrm{geom}}^2(X; \Z)}\ar[ru]^-{I} \ar[rd]^-{{R}}& & H^2(X; \R) \\
        & \Omega^{1}(X) / \Omega^{1}_\clo(X)_\Z \ar[ru]^-{{a}} \ar[rr]^-{d}& & \Omega^{2}_\clo(X)_\Z \ar[ru]^-{\mathrm{Rham}}\ar[rd]& \\
        0 \ar[ru]&&&&0
        }
    \end{align*}
Here $\Omega^n_\clo(X)_\Z$ denotes the closed forms of integral periods. 
The above diagram is commutative, and the diagonal sequences are exact. 

We would like to define $\widehat{H}^n(X; \Z)$ for higher $n$, having analogous structure maps and hexagons. 

\subsubsection{Cheeger-Simons' differential characters}

Here we explain a model for a differential extension of $H\Z$ given by Cheeger and Simons, in terms of {\it differential characters} \cite{CheegerSimonsDiffChar}. 

For a manifold $X$ and a nonnegative integer $n$, the group of {\it differential characters} $\widehat{H}_{\mathrm{CS}}^n(X; \Z)$ is the abelian group consisting of pairs $(\omega, k)$, where
\begin{itemize}
    \item A closed differential form $\omega \in \Omega_{\mathrm{clo}}^n(X)$, 
    \item A group homomorphism $k \colon Z_{\infty, n-1}(X; \Z) \to \R/\Z$, 
    \item $\omega$ and $k$ satisfy the following compatibility condition. 
    For any $c \in C_{\infty, n}(X; \Z)$ we have
    \begin{align}\label{eq_compatibility_diffchar}
        k(\del c) \equiv \langle \omega, c \rangle_X \pmod \Z. 
    \end{align}
\end{itemize}
Here $C_{\infty, n}(X; \Z)$ and $Z_{\infty, n-1}(X; \Z)$ denote the smooth singular chains and cycles with integer coefficients.

We have homomorphisms
\begin{align*}
R_{\mathrm{CS} } &\colon \widehat{H}_{\mathrm{CS}}^n(X; \Z) \to \Omega_{\mathrm{clo}}^n(X), \quad (\omega, k) \mapsto \omega \\
a_{\mathrm{CS}} &\colon \Omega^{n-1}(X)/\mathrm{Im}(d) \to  \widehat{H}_{\mathrm{CS}}^n(X; \Z) , \quad
    \alpha \mapsto (d\alpha, \alpha). 
\end{align*}
and the quotient map gives
\begin{align*}
    I_{\mathrm{CS}} \colon \widehat{H}_{\mathrm{CS}}^n(X; \Z) \to\widehat{H}_{\mathrm{CS}}^n(X; \Z) / \mathrm{Im}(a_{\mathrm{CS}}) \simeq  H^n(X; \Z). 
\end{align*}
  We get the {\it differential cohomology hexagon}, 
    \begin{align}\label{diag_hexagon}
        \xymatrix@R = 6pt@C=10pt{
        0 \ar[rd] & & & & 0 \\
        & H^{n-1}(X; \R/\Z)\ar[rr]^-{\mathrm{Bock}} \ar[rd] & & H^n(X; \Z) \ar[ru] \ar[rd]^-{\otimes \R}& \\
        H^{n-1}(X; \R)\ar[ru] \ar[rd] & & {\widehat{H}_\CS^n(X; \Z)}\ar[ru]^-{I} \ar[rd]^-{{R}}& & H^n(X; \R) \\
        & \Omega^{n-1}(X) / \Omega^{n-1}_\clo(X)_\Z \ar[ru]^-{{a}} \ar[rr]^-{d}& & \Omega^{n}_\clo(X)_\Z \ar[ru]^-{\mathrm{Rham}}\ar[rd]& \\
        0 \ar[ru]&&&&0
        }
    \end{align}
   
    The above diagram is commutative. 
    The diagonal sequences are exact. 

We can construct an isomorphism between the two models $\widehat{H}_{\mathrm{geom}}^2$ and $\widehat{H}_{\mathrm{CS}}^2$ by taking the holonomy of connections, 
\begin{align}
    \widehat{H}_{\mathrm{geom}}^2(X; \Z) \to \widehat{H}_{\mathrm{CS}}^2(X; \Z), \quad [L, \nabla] \mapsto (F_\nabla/(2\pi \sqrt{-1}), \mathrm{Hol}(L, \nabla)). 
\end{align}

\subsubsection{Hopkins-Singer's differential cocycles}
Let $X$ be a manifold. 
    An $n$-th {\it differential cocycle} on $X$ is an element 
    \begin{align*}
        (c, h, \omega) \in Z_\infty^n(X; \Z) \times C_\infty^{n-1}(X; \R) \times \Omega^n_\clo(X)
    \end{align*}
    such that
    \begin{align}
        \omega - c_\R = \delta h. 
    \end{align}
    Here $C^*_\infty$ and $Z^*_\infty$ denotes the groups of smooth singular cochains and cocycles. 
    We introduce the equivalence relation $\sim$ on differential cocycles by setting
    \begin{align*}
        (c, h, \omega) \sim (c+\delta b, h - b_\R - \delta k, \omega)
    \end{align*}
    for some $(b, k) \in C^{n-1}_\infty(X; \Z) \times C_\infty^{n-2}(X; \R)$. 
    
\begin{defn}[{$\widehat{H}^*_\HS(X; \Z)$ \cite{HopkinsSinger2005}}]
Set 
\begin{align*}
    \widehat{H}^n_\HS(X; \Z) := \{(c, h, \omega) : \mbox{differential }n\mbox{-cocycle on } X\} / \sim
\end{align*}
\end{defn}

The group $\widehat{H}^n_\HS(X; \Z)$ also fits into the hexagon \eqref{diag_hexagon}. 
In fact we have a natural isomorphism $\widehat{H}^n_\HS \simeq \widehat{H}^n_\CS$. 

\subsection{Differential $K$-theory}\label{subsec_FL}
$K$-theory is the most studied generalized cohomology theory, other than $H\Z$, in the context of differential cohomology. 
There are various models for topological $K$-theories. 
Correspondingly, there are various models for differential $K$-theory. 
Here we briefly review the model constructed by Freed and Lott \cite{FL2010} in terms of vector bundles with connections. 
We will explain another model in Subsection \ref{subsubsec_Kdiffchar} below. 

Recall that the topological $K$-theory $K^0(X)$ can be defined in terms of stable isomorphism classes $[E]$ of complex vector bundles over $X$. 
The differential refinement is, roughly speaking, given by equipping vector bundles with connections. 
An element of $\widehat{K}_\FL^0(X)$ is represented by a triple $(E, \nabla^E, \eta)$, where $(E, \nabla^E)$ is a hermitian vector bundle over $X$ with a unitary connection and $\eta \in \Omega^{2\Z -1}(X)/\mathrm{im}(d)$. 
We take equivalence classes with respect to 
a suitable equivalence relation including the relation
\begin{align}
    \left(E, \nabla_1^E, \eta\right) \sim \left(E, \nabla_0^E, \eta + \mathrm{CS}(\nabla_0^E, \nabla_1^E)\right), 
\end{align}
where $\mathrm{CS}(\nabla_0^E, \nabla_1^E)$ is the Chern-Simons form measuring the difference of two connections. 
We have the structure homomorphisms 
\begin{align}
    I &\colon \widehat{K}_\FL^0(X) \to K^0(X), \quad [E, \nabla^E, \eta] \mapsto [E]\\
    R &\colon \widehat{K}_\FL^0(X) \to \Omega_\clo^{2\Z}(X), \quad [E, \nabla^E, \eta] \mapsto \mathrm{Ch}(\nabla^E) + d\eta \\
    a &\colon \Omega^{2\Z -1}(X)/\mathrm{im}(d) \to \widehat{K}_\FL^0(X), \quad \eta \mapsto [0, 0, \eta]. 
\end{align}
We also have the similar hexagon as \eqref{diag_hexagon}. 

\subsection{The axiomatic formulation of differential cohomology}\label{subsec_axiom_diffcoh}

We explain the axiomatic formulation of differential cohomology theory by Bunke and Schick \cite{BSDiffKSurvey}. 
We remark that there are important another formulation in terms of sheaves of spectra \cite{BNVsheafofSpectra}. 

Let $E^*$ be a generalized cohomology theory. 
Let $N^\bullet$ be a graded vector space over $\R$ equipped with a transformation of cohomology theories
\begin{align}
    \mathrm{ch} \colon E^* \to H^*(-; N^\bullet). 
\end{align}
The universal choice is $N^\bullet = E^\bullet_\R(\pt)  =: V_E^\bullet$ with $\mathrm{ch}$ the Chern-Dold homomorphism (\cite[Chapter II, 7.13]{rudyak1998}) for $E$. 

For a manifold $X$, set $\Omega^*(X; N^\bullet) := C^\infty(X; \wedge T^*M \otimes_\R N^\bullet)$ with the $\Z$-grading by the total degree. 
Let $d \colon \Omega^*(X; N^\bullet) \to \Omega^{*+1}(X; N^\bullet)$ be the de Rham differential. 
We have the natural transformation
\begin{align*}
    \mathrm{Rham} \colon \Omega^*_{\mathrm{clo}}(X; N^\bullet) \to H^*(X; N^\bullet). 
\end{align*}

\begin{defn}[{Differential extensions of a cohomology theory, \cite[Definition 2.1]{BSDiffKSurvey}}]\label{def_diffcoh}
A {\it differential extension} of the pair $(E^*, \mathrm{ch})$ is a quadruple $(\widehat{E}, R, I, a)$, where
\begin{itemize}
    \item $\widehat{E}$ is a contravariant functor $\widehat{E} \colon \mathrm{Mfd}^\mathrm{op} \to \mathrm{Ab}^\Z$. 
    \item $R$, $I$ and $a$ are natural transformations
    \begin{align*}
        R &\colon \widehat{E}^* \to \Omega_{\mathrm{clo}}^*(-; N^\bullet) \\
        I &\colon \widehat{E}^* \to E^*\\
        a &\colon \Omega^{*-1}(-; N^\bullet) / \mathrm{im}(d) \to \widehat{E}^* . 
    \end{align*}
\end{itemize}
We require the following axioms. 
\begin{itemize}
    \item $R \circ a = d$. 
    \item $\mathrm{ch} \circ I = \mathrm{Rham} \circ R$. 
   \item For all manifolds $X$, the sequence
   \begin{align}\label{eq_exact_diffcoh}
       E^{*-1}(X) \xrightarrow{\mathrm{ch}} \Omega^{*-1}(M ; N^\bullet) / \mathrm{im}(d) \xrightarrow{a} \widehat{E}(X) \xrightarrow{I} E^*(X) \to 0
   \end{align}
   is exact. 
\end{itemize}
In the case $N^\bullet = V_E^\bullet$ and $\mathrm{ch}$ is the Chern-Dold homomorphism, we simply call it a {\it differential extension of $E^*$}. 
\end{defn}
Such a quadruple $(\widehat{E}, R, I, a)$ itself is also called a {\it generalized differential cohomology theory}. 
We usually abbreviate the notation and just write a generalized cohomology theory as $\widehat{E}^*$. 
The above axiom is equivalent to the half of the hexagon \ref{diag_hexagon}. 

Hopkins and Singer gave a model of an differential extension for an arbitrary $(E^*, \mathrm{ch})$ in terms of {\it differntial function spectra} (\cite{HopkinsSinger2005}). 
But we cannot guarantee that any differential cohomology theory is isomorphic to the Hopkins-Singer's model. 
For more about this uniqueness issue, we refer to \cite{BunkeSchick2010}.

\section{Background : Invertible QFTs and Anderson duals}\label{sec_QFT}

\subsection{QFTs and invertibility}
Basically, a quantum field theory (QFT) is a symmetric monoidal functor 
\begin{align}\label{eq_QFT_functor}
    \mathcal{T} \colon \Bord_{n}^{\mathcal{S}} \to \mathcal{C}. 
\end{align}
Here $n$ is the dimension of the theory, and $\mathcal{S}$ specifies a structure on manifolds, such as orientations, Riemannian metrics, spin structures and principal $G$-bundles with connections. 
The domain $\Bord_n^{\mathcal{S}}$ is the $n$-dimensional {\it Bordism category} of $\mathcal{S}$-manifolds. 
The mathematical formulation for it varies depending on the context. 
If we are talking about {\it non-extended topological QFTs (TQFTs)}, $\Bord_n^{\mathcal{S}}$ is an ordinary category whose objects are $(n-1)$-dimensional closed $\mathcal{S}$-manifolds and morphisms are given by bordisms by compact $n$-dimensional $\mathcal{S}$-manifolds (\cite{AtiyahTQFT}, \cite{SegalCFT}). 
In the case of {\it extended TQFTs}, $\Bord_n^{\mathcal{S}}$ is a higher category, up to $(\infty, n)$-category in the fully extended case (\cite{LurieTQFT}). 
If we consider possibly {\it non-topological} QFTs, we need to impose some smoothness to the functor \eqref{eq_QFT_functor}. 
For this we can use the smooth version of (higher) categories formulated by Grady and Pavlov \cite{GradyPavlovBordism}. 

According to the above variations, the formulation of the target $\mathcal{C}$ also depends on the context.  
Mathematically we can allow any symmetric monoidal ($(\infty, n)$-, smooth, etc) category. 
However, the physically natural target category is the category of $\Z/2$-graded complex vector spaces $\mathrm{sVect}_\C$ in the non-extended case. 
In the extended case, it is natural to consider higher categories extending $\mathrm{sVect}_\C$, such as $\mathrm{sAlg}_\C$. 
We do not know the canonical target in general. 
Moreover, in order for a functor to be physically meaningful, we need other conditions such as reflection positivity and unitarity.

A QFT $\mathcal{T}$ is called {\it invertible} if it factors as
\begin{align}\label{eq_functor_invertible}
     \mathcal{T} \colon \Bord_n^{\mathcal{S}} \to \mathcal{C}^\times \subset \mathcal{C}, 
\end{align}
where $\mathcal{C}^\times$ is the maximal Picard ($\infty$-, smooth...) groupoid of $\mathcal{C}$. 
Here a {\it Picard groupoid} is a symmetric monoidal category whose all objects and morphisms are invertible under the monoidal product and the composition, respectively. 
This means that $\mathcal{T}$ assigns something invertible to all objects and morphisms of $\Bord_n^{\mathcal{S}}$. 
For example, if $\mathcal{C} = \mathrm{sVect}_\C$, we have $\mathcal{C}^\times = \mathrm{sLine}_\C$, the groupoid of one dimensional $\Z/2$-graded complex vector spaces and grading-preserving invertible linear maps. 

Invertible QFTs are a special, but an important class of QFTs. 
For example they arise as {\it Symmetry Protected Topological phases (SPT phases)} in condensed matter physics. 
In that context, physical systems are described by Hamiltonians having uniquely gapped ground states, and the effective theories are considered as invertible theories. 
Invertible theories are also important in the study of {\it anomalies}. 
A $(n-1)$-dimensional anomalous theory is formulated as a boundary theory of a $n$-dimensional invertible theory. 
Thus the classification of anomalies in $(n-1)$-dimension reduces to the classification of invertible theories in $n$-dimension.

\subsection{Classification of invertible QFTs and Freed-Hopkins conjecture}\label{subsec_freedhopkins}
Fully extended invertible QFTs are believed to be classified in terms of generalized cohomology. 
The argument uses the {\it stable homotopy hypothesis}, which gives equivalence between Picard $\infty$-groupoids with connective spectra. 
By this the functor \eqref{eq_QFT_functor} is transformed into a map of spectra. This means that $\mathcal{T}$ is regarded as an element of generalized cohomology represented by the mapping spectra. 

For simplicity we focus on the case where the structure $\mathcal{S}$ is given by a tangential $G$-structure (for precise formulation see\cite[Section 3]{YamashitaYonekura2021} ), where $G= \{G_d, s_d, \rho_d\}_{d \in \Z_{\ge 0}}$ is a sequence of compact Lie groups with homomorphisms $s_d \colon G_d \to G_{d+1}$ and $\rho_d \colon G_d \to \mathrm{O}(d; \R)$ satisfying the compatibility with $\mathrm{O}(d; \R) \hookrightarrow \mathrm{O}(d+1, \R)$.  

By the work of Galatius, Madsen, Tillmann and Weiss \cite{GMTW}, the spectrum corresponding to the groupoid completion of $\Bord_n^G$ is the (shift of) {\it Madsen-Tillmann spectrum} $MTG$. 
This is a normal version of the Thom spectrum, and represents the tangential $G$-bordism homology theory $\Omega^G_*$. 

On the other hand, the spectrum corresponding to the target $\mathcal{C}^\times$ is unclear if we do not specify it. 
As explained above, we do not know the universal target of physically meaningful fully extended QFTs. 
Freed and Hopkins proposed an {\it ansatz} that the universal target for invertible {\it topological} QFTs corresponds to the spectrum $I\C^\times$, the {\it Brown-Comenetz dual} to the sphere (footnote \ref{footnote_BCdual}). 
Based on this ansatz, they conjecture that

\begin{conj}[{\cite[Conjecture 8.37]{Freed:2016rqq}}] \label{conj_intro}
There is a $1 : 1$ correspondence
\begin{align}\label{eq_conj_intro}
    \left\{
    \parbox{18em}{deformation classes of reflection positive invertible $n$-dimensional fully extended field theories with symmetry type\footnote{Here {\it symmetry types} of QFT's in \cite{Freed:2016rqq} are certain classes of $G$'s in this paper which satisfy an additional set of conditions. } $G$}
    \right\} \simeq (I\Omega^G)^{n+1}(\pt). 
\end{align}
\end{conj}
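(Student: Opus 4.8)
This is a conjecture, and we do not prove it; the constructions of the present article, together with those of \cite{YamashitaYonekura2021}, are designed to provide the differential-geometric half of the following program. The plan is to factor the asserted correspondence through the differential Anderson dual $\widehat{I\Omega^G_\dR}$ of Section~\ref{sec_IE}, applied to the Madsen--Tillmann spectrum $E = MTG$. First, to a reflection positive invertible $n$-dimensional field theory $\mathcal{T}$ of symmetry type $G$ one attaches the pair $(\omega_{\mathcal{T}}, h_{\mathcal{T}}) \in \widehat{I\Omega^G_\dR}^{n+1}(\pt)$, where $h_{\mathcal{T}}$ sends a closed $n$-dimensional $G$-manifold $X$ equipped with the relevant geometric data (a Riemannian metric, a connection, and so on) to the phase $\tfrac{1}{2\pi}\arg\mathcal{T}(X) \in \R/\Z$ of its partition function --- which lies in $\R/\Z$ rather than $\C^\times$ because reflection positivity forces $|\mathcal{T}(X)| = 1$ --- and $\omega_{\mathcal{T}}$ is the associated anomaly form. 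The failure of $\mathcal{T}(X)$ to be bordism invariant is governed, through the gluing axiom for $\mathcal{T}$, precisely by the compatibility between $\omega_{\mathcal{T}}$ and $h_{\mathcal{T}}$ imposed in the definition of $\widehat{I\Omega^G_\dR}$ (``anomaly inflow''), so this yields a well-defined element; one also checks that the assignment is natural and monoidal, so that products of theories map to sums. The smooth model of bordism categories of Grady and Pavlov \cite{GradyPavlovBordism} is what makes ``differentiating along families'' meaningful for possibly non-topological $\mathcal{T}$.

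Second, the deformation-class invariant of $\mathcal{T}$ is then the composite of $\mathcal{T} \mapsto (\omega_{\mathcal{T}}, h_{\mathcal{T}})$ with the structure map $I\colon \widehat{I\Omega^G_\dR}^{n+1}(\pt) \to (I\Omega^G)^{n+1}(\pt)$. A one-parameter family $\{\mathcal{T}_t\}$ of such theories keeps $I(\omega_{\mathcal{T}_t}, h_{\mathcal{T}_t})$ constant, this being a locally constant discrete invariant, while changing $(\omega_{\mathcal{T}_t}, h_{\mathcal{T}_t})$ by the image under $a$ of the transgression form of the family; hence the invariant descends to deformation classes, using the exact sequence \eqref{eq_exact_diffcoh} and the identification $\widehat{I\Omega^G_\dR}^{n+1}(\pt)/\mathrm{im}(a) \cong (I\Omega^G)^{n+1}(\pt)$ coming from the hexagon \eqref{diag_hexagon}.

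Third, one must prove that the resulting map is a bijection. For injectivity one shows that a theory with trivial phase $h_{\mathcal{T}}$ and trivial anomaly form $\omega_{\mathcal{T}}$ can be deformed to the trivial theory; for surjectivity, that every class in $(I\Omega^G)^{n+1}(\pt)$ is realised --- for instance by pulling back a universal invertible theory along the corresponding classifying map, or by a $\sigma$-model, Chern--Simons, or $\eta$-invariant type construction. Here one invokes the stable homotopy hypothesis, which through the theorem of Galatius--Madsen--Tillmann--Weiss \cite{GMTW} identifies the groupoid completion of $\Bord_n^G$ with $MTG$; the Freed--Hopkins ansatz \cite{Freed:2016rqq} that the universal target for invertible \emph{topological} theories corresponds to the Brown--Comenetz dual $I\C^\times$, which upon imposing reflection positivity yields the classification of reflection positive invertible topological theories by $(I\Omega^G)^{n+1}(\pt)$; and the cobordism hypothesis, used to pass between a fully extended invertible theory and the corresponding map of spectra.

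The main obstacle is this third step, and more precisely the cobordism hypothesis for \emph{non-topological}, smooth, fully extended field theories, which is not on the same rigorous footing as its topological counterpart. The invariant $(\omega_{\mathcal{T}}, h_{\mathcal{T}})$ is extracted only from the top two layers of $\mathcal{T}$ --- its partition function and its gluing axiom --- and the essential content of the conjecture is that, for invertible theories, this low-dimensional data controls all of the higher extended structure up to deformation. Closely related are the facts that the Freed--Hopkins identification of the target spectrum is itself an ansatz rather than a theorem, and that reflection positivity --- needed both to cut $I\C^\times$ down to the Anderson dual and to guarantee $|\mathcal{T}(X)| = 1$ in the first step --- must be threaded carefully through the whole argument. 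What the present article contributes is to remove one source of difficulty from the first two steps: the differential Anderson dual $\widehat{I\Omega^G_\dR}$ is an honest receptacle for the data of a possibly non-topological invertible theory, so that ``deformation class'' acquires the precise meaning ``image under $I$'', and the remaining open problem is the comparison with the spectrum side carried out in the third step.
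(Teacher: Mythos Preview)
You correctly recognize that this is a conjecture, not a theorem, and that the paper does not prove it; the paper simply states it as background from \cite{Freed:2016rqq}. Your elaboration of a three-step program and the role of $(\omega_{\mathcal{T}}, h_{\mathcal{T}})$ is consistent with the paper's Section~\ref{sec_interpretation}, though the paper itself is more modest: it records the empirical facts \eqref{eq_empirical1} and \eqref{eq_empirical2} motivating the interpretation of $h$ as a phase and of $I$ as passage to deformation classes, without laying out injectivity/surjectivity arguments or invoking the cobordism hypothesis as you do.
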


In fact they prove that {\it topological} ones are classified by the torsion part of $(I\Omega^G)^{n+1}(\pt)$. 

\subsection{Anderson duals}

In this subsection we collect basics on the Anderson duals for spectra. 
For more details, see for example \cite[Appendix B]{HopkinsSinger2005} and \cite[Appendix B]{FMS07}. 

First note that the assignment $X \mapsto \mathrm{Hom}(\pi_*(X), \R/\Z)$ for each spectrum $X$ satisfies the Eilenberg-Steenrood axioms, so it is represented by an spectrum denoted by $I(\R/\Z)$\footnote{
If we replace $\R/\Z$ by $\C^*$, we get the Brown-Comenetz dual to the sphere. \label{footnote_BCdual}
}.
On the other hand we have $H^n(X; \R) \simeq \mathrm{Hom}(\pi_n(X); \R)$. 
So we have a mod $\Z$ reduction map $H\R \to I(\R/\Z)$. 

The {\it Anderson dual to sphere $I\Z$} is the spectrum defined as the homotopy fiber
    \begin{align}
        I\Z \to H\R \to I(\R/\Z). 
    \end{align}
For a general spectrum $E$, we define its {\it Anderson dual} as the function spectrum, 
\begin{align}
    IE := F(E, I\Z). 
\end{align}
This implies that we have the exact sequence (Universal Coefficient Theorem)
\begin{align}\label{eq_exact_DE}
    0 \to \mathrm{Ext}(E_{n}(X), \Z) \to IE^{n+1}(X)  \to \mathrm{Hom}(E_{n+1}(X), \Z) \to 0. 
\end{align}

$MTG$ represents the $G$-bordism homology theory $\Omega^G_*$. 
The Anderson dual $I\Omega^G = F(MTG, I\Z)$ fits into the exact sequence
\begin{align}
    0 \to \mathrm{Ext}(\Omega^G_{n}(X), \Z) \to (I\Omega^G)^{n+1}(X)  \to \mathrm{Hom}(\Omega^G_{n+1}(X), \Z) \to 0. 
\end{align}

We note that $\mathrm{Ext}(\Omega^G_{n}(X), \Z)$ is the torsion part of $(I\Omega^G)^{n+1}(X)$ for reasonable spaces $X$. 
According to the Freed-Hopkins' result about the classification in {\it topological} case, this group classifies $n$-dimensional invertible {\it topological} QFTs on $G$-manifolds. 
Conjecture \ref{conj_intro} says that we can use whole of $(I\Omega^G)^{n+1}(X) $ to classify possibly non-topological invertible QFTs. 

In order to give a better understanding of this conjecture, Yonekura and the author gave a new model for $I\Omega^G$ with a physical interpretation (\cite{YamashitaYonekura2021}). 
It turns out that the natural object produced from QFTs is a differential refinement of it, and passing to topological group corresponds to taking deformation classes. 
Also it has turned out that the analogous construction applies to give a ``QFT-like'' model of differential refinement of $IE$ for general $E$, provided that a differential {\it homology} $\widehat{E}_*$ is given. 
In the rest of this article we explain this construction.

\section{Differential homology theories}\label{sec_diffhom}

In this section we give a formulation of differential {\it homology} theories and explain examples. 

\subsection{The axiom}\label{subsec_axiom_diffhom}

Here we give the axioms for differential extensions of generalized homology theories. 
It is just a straightforward homology-version of the axiom in \cite{BunkeSchick2010}. 
We set $V_\bullet^E := \pi_\bullet(E) \otimes_\Z \R$. 
We have the homology Chern-Dold character homomorphism
\begin{align}
    \mathrm{ch} \colon E_* \to H_*(-; V^E_\bullet). 
\end{align}
Here we focus on the differential extension in the coefficient $V^E_\bullet$ for our purpose, but the variant in coefficient can be formulated as in the cohomology case (Section \ref{subsec_axiom_diffcoh}). 

\begin{defn}[{Differential extensions of homology theories}]\label{def_diffhom}
A {\it differential extension} of a generalized homology theory $E_*$ is a quadruple $(\widehat{E}_*, R, I, a)$, where
\begin{itemize}
    \item $\widehat{E}_*$ is a covariant functor $\widehat{E}_* \colon \mathrm{Mfd} \to \mathrm{Ab}^\Z$. 
    \item $R$, $I$ and $a$ are natural transformations
    \begin{align*}
        R &\colon \widehat{E}_* \to \Omega^{\mathrm{clo}}_*(-; V_\bullet^E) \\
        I &\colon \widehat{E}_* \to E_*\\
        a &\colon \Omega_{*+1}(-; V_\bullet^E) / \mathrm{im}(\del) \to \widehat{E}_*.
        \end{align*}
\end{itemize}
\begin{itemize}
    \item $R \circ a = \del$. 
    \item $\mathrm{ch} \circ I = \mathrm{Rham} \circ R$. 
   \item For all manifolds $X$, the sequence
   \begin{align}\label{eq_exact_diffhom}
       E_{*+1}(X) \xrightarrow{\mathrm{ch}} \Omega_{*+1}(M ; V_\bullet^E) / \mathrm{im}(\del) \xrightarrow{a} \widehat{E}_*(X) \xrightarrow{I} E_*(X) \to 0
   \end{align}
   is exact. 
\end{itemize}
\end{defn}

\subsection{Examples}\label{subsec_ex_diffhom}

\subsubsection{Differential $K$-homology}\label{subsubsec_diff_Khom}
Elmrabty \cite{ElmrabtyDiffKhom} defines differential $K$-homology, by refining Baum-Douglas model of $K$-homology in terms of geometric cycles (\cite{BaumDouglasKhomology}). 
This has a physical interpretation in terms of {\it D-branes}. 

Recall that in the Baum-Douglas model, an element in $K_n(X)$ is represented by a {\it $K$-cycle}, a triple $(M, E, f)$ consisting of closed Spin$^c$-manifold $M$ with $\dim M \equiv n \pmod 2$, a complex vector bundle $E$ over $M$ and a continuous map $f \colon M \to X$. 
The $K$-homology group $K_n(X)$ is defined to be the group of equivalence classes of $K$-cycles under the equivalence relation generated by direct sum, bordism and ``vector bundle modification''. 

Correspondingly, the differential $K$-homology group $\widehat{K}_n(X)$ is represented by a {\it differential $K$-cycle} $((M, \nabla^M), (E, \nabla^E), f, \psi)$, where in addition to the data of a $K$-cycle, $M$ is equipped with a Spin$^c$-connection $\nabla^M$, $E$ is equipped with a hermitian metric and unitary connection $\nabla^E$, $f$ is required to be smooth, and we have additional data $\psi \in \Omega_{2\Z + n + 1}(X) / \mathrm{im}(\del)$. 
The group $\widehat{K}_n(X)$ is given by taking equivalence classes under the corresponding equivalence relation suitably modified with differential data. 
For example the bordism relation in this case is, if we have a $2\Z + n + 1$-dimensional compact $((W, \nabla^W), (\mathcal{E}, \nabla^\mathcal{E}), f, 0)$ with boundary, we set
\begin{align}\label{eq_bordism_Kcyc}
    \left( (\del W, \nabla^W|_{\del W}), (\mathcal{E}|_{\del W}, \nabla^\mathcal{E}|_{\del W}), f|_{\del W}, \int_W \mathrm{Todd}(\nabla^E) \mathrm{Ch}(\nabla^\mathcal{E})f^*
    \right) \sim 0. 
\end{align}

The axiom above is satisfied with the structure maps
\begin{align}
    R \left( [(M, \nabla^M), (E, \nabla^E), f, \psi]\right) &:= \int_M \mathrm{Todd}(\nabla^M)\mathrm{Ch}(\nabla^E)  f^* - \del \psi, \label{eq_R_Khom} \\
    I \left([(M, \nabla^M), (E, \nabla^E), f, \psi]\right)&:= [M, E, f], \\
    a  \left([\psi]\right)& := [\varnothing, 0, (\varnothing \to X), \psi]. 
\end{align}
where we identify $\Omega_n(X; V_\bullet^K) \simeq \Omega_{2\Z + n}(X)$. 

\subsubsection{Differential ordinary homology}\label{subsubsec_diff_HZ_homology}
We can construct a refinement $\widehat{H}^{\HS}_*(-; \Z)$ of the ordinary integral homology given by a homology-version of differential cocycles \cite{HopkinsSinger2005}. 
Namely, a {\it differential $n$-cycles} over $X$ is defined to be an element
\begin{align}
    (c, h, \omega) \in Z^\infty_n(X; \Z) \times C^\infty_{n+1}(X; \R) \times \Omega^{\mathrm{clo}}_n(X)
\end{align}
such that $\del h = \omega - c$ as smooth currents\footnote{
Note that we have $C_n^\infty \subset \Omega_n$. 
}. 
We define a homomorphism
\begin{align}
    \del \colon C_{n+1}^\infty(X; \Z) \times C^\infty_{n+2}(X; \R) \to Z^\infty_n(X; \Z) \times C^\infty_{n+1}(X; \R) \times \Omega^{\mathrm{clo}}_n(X)
\end{align}
by $\del(b, k) := (\del b, -\del k -b, 0)$. 
We define
\begin{align}
    \widehat{H}^{\HS}_n(X; \Z) := \{ \mbox{differential }n\mbox{-cocycles over }X\} / \mathrm{im} \del. 
\end{align}
It is straightforward to construct structure maps and check the axioms above.

\subsubsection{Differential $G$-bordism homology theory $\widehat{\Omega_*^G}$}\label{subsubsec_diff_bordism}
Here we construct a differential refinement of tangential $G$-bordism homology theories. 
We use the notations and conventions about differential $G$-structures introduced in \cite[Section 3]{YamashitaYonekura2021}. 
Let $G = \{G_d, s_d, \rho_d\}_{d \in \Z_{\ge 0}}$ be tangential structure groups. 
Then we get the {\it Madsen-Tillmann spectrum} $MTG$, which represents the {\it tangential $G$-bordism homology theory} $\Omega^G_*$. 
For details see \cite[Section 6.6]{Freed19}
The topological group $\Omega^G_n(X)$ is the stable $G$-bordism group, the group consisting of bordism classes $[M, g^{\rm{top}}, f]$, where $(M, g^{\rm{top}})$ is a $n$-dimensional closed manifold with a stable tangential $G$-structure and $f \colon M \to X$ is a continuous map. 

The differential refinement $\widehat{\Omega^G_n}(X)$ is constructed in terms of {\it differential stable tangential $G$-cycles}, as follows. 
A $n$-dimensional differential stable tangential $G$-cycle 
over $X$ is a triple $(M, g, f)$, where in this case $g$ is a {\it differential} stable $G$-structure (i.e., equipped with $G$-connection) and $f \colon M \to X$ is required to be smooth (for details see \cite[Section 3]{YamashitaYonekura2021}). 

We use the {\it Bordism Picard groupoid} $\hBord_n(X)$ defined in \cite[Definition 3.8]{YamashitaYonekura2021}. 
The objects are differential stable tangential $G$-cycles $(M, g, f)$ of dimension $n$ over $X$, and 
the morphisms are bordism class $[W, g_W, f_W]$ of bordisms of differential stable tangential $G$-cycles. 

We need the Chern-Weil construction in this setting (\cite[Subection 4.1.1]{YamashitaYonekura2021}) We set\footnote{In the general notation introduced in \eqref{eq_def_NE} below, we have $N^\bullet_G = N^\bullet_{MTG}$. We abbreviate the notation. } 
\begin{align}\label{eq_inv_poly}
    N_G^\bullet
    := H^\bullet(MTG; \R) = \varprojlim_{d} H^\bullet(BG_d; \R_{G_d}) = \varprojlim_{d}(\mathrm{Sym}^{\bullet/2}\mathfrak{g}_d^* \otimes_\R \R_{G_d})^{G_d}. 
\end{align}
In the case where $G$ is oriented, i.e., the image of $\rho_d$ lies in $\mathrm{SO}(d, \R)$ for each $d$, the $G_d$-module $\R_{G_d}$ is trivial and $N_G^\bullet$ is the projective limit of invariant polynomials on $\mathfrak{g}_d$. 
In general cases, $N_G^\bullet$ can be regarded as the projective limit of polynomials on $\mathfrak{g}_d$ which change the sign by the action of $G_d$.

A differential stable tangential $G$-structure $g$ on a manifold $M$ defines a homomorphism (\cite[Definition 4.4]{YamashitaYonekura2021}), 
\begin{align}
    \mathrm{cw}_g  \colon \Omega^*(M; N_G^\bullet) \to \Omega^*(M; \mathrm{Ori}(M)), 
\end{align}
where $\mathrm{Ori}(M)$ is the orientation line bundle of $M$. 

An object $(M, g, f)$ in $\hBord_n(X)$ gives a closed $n$-current
\begin{align}
    \cw(M, g, f) \in \Omega_n^{\mathrm{clo}}(X; V^{MTG}_\bullet) \subset \Hom_{\mathrm{conti}} (\Omega^{n}(X; N_G^\bullet), \R), 
\end{align}
by, for $\omega \in \Omega^{n}(X; N_G^\bullet)$, 
\begin{align}
    \cw(M, g, f)(\omega) := \int_M \cw_g(f^*\omega). 
\end{align}
Similarly, a bordism $(W, g_W, f_W) \colon (M_-, g_-, f_-) \to (M_+, g_+, f_+)$ of differential stable tangential $G$-cycles of dimension $n$ gives an $(n+1)$-current
\begin{align}\label{eq_cw_current_bordism}
    \cw(W, g_W, f_W) \in \Omega_{n+1}(X; V^{MTG}_\bullet) \subset \Hom_{\mathrm{conti}} (\Omega^{n+1}(X;  N_G^\bullet), \R), 
\end{align}
by, for $\omega \in \Omega^{n+1}(X;  N_G^\bullet)$, 
\begin{align}\label{eq_def_cw_omega_mor}
    \cw[W, g_W, f_W](\omega) := \int_W \cw_{g_W}({f_W}^*\omega), 
\end{align}
If we have two such bordisms $(W, g_W, f_W)$ and $(W', g_W', f_W')$ which are bordant, the corresponding currents \eqref{eq_cw_current_bordism} differs by an image of $\del$. 
Thus for a morphism $[W, g_W, f_W]$ in $\hBord_n(X)$ we get an element
\begin{align}
    \cw[W, g_W, f_W] \in \Omega_{n+1}(X; V^{MTG}_\bullet) / \mathrm{im}(\del). 
\end{align}

\begin{defn}[{$\widehat{\Omega_*^G}$}]\label{def_diff_bordism}
Let $X$ be a manifold and $n$ be an integer. 
\begin{enumerate}
    \item 
We set
\begin{align}
    \widehat{\Omega_n^G}(X) := \{(M, g, f, \eta)\} / \sim, 
\end{align}
where $(M, g, f)$ is an object in $\hBord_n(X)$ and $\eta \in \Omega_{n+1}(X; V^{MTG}_\bullet) / \mathrm{im}(\del)$. 
The relation $\sim$ is the equivalence relation generated by the relation
\begin{align}
    (M_-, g_-, f_-, \eta) \sim (M_+, g_+, f_+, \eta - \cw[W, g_W, f_W]) 
\end{align}
for each morphism $[W, g_W, f_W] \colon (M_-, g_-, f_-) \to (M_+, g_+, f_+)$ in $\hBord_n(X)$. 
We regard $\widehat{\Omega^G}$ as a functor $\mathrm{Mfd} \to \mathrm{Ab}^\Z$ in the obvious way. 
\item
We define natural transformations $R$, $I$ and $a$ by
\begin{align*}
    R &\colon \widehat{\Omega_n^G}(X) \to \Omega_n^{\mathrm{clo}}(X; V^{MTG}_\bullet) , \quad
    [M, g, f, \eta] \mapsto \cw(M, g, f) + \del \eta, \\
    I &\colon \widehat{\Omega_n^G}(X) \to \Omega_n^G(X), \quad
    [M, g, f, \eta] \mapsto [M, g, f], \\
    a  &\colon \Omega_{n+1}(X; V^{MTG}_\bullet) / \mathrm{im}(\del) \to \widehat{\Omega_n^G}(X) , \quad
    \eta \mapsto [\varnothing, \eta]. 
\end{align*}
\end{enumerate}
\end{defn}

We can easily check that the quadruple $(\widehat{\Omega_*^G}, R, I, a)$ satisfies the axiom in Definition \ref{def_diffhom}. 

\section{The Anderson duals to differential homology}\label{sec_IE}
\subsection{The construction}\label{subsec_def_hat_IE}
Let $E$ be a spectrum and assume that we are given a differential extension $(\widehat{E}_*, R_{E_*}, I_{E_*}, a_{E_*})$ of $E$-homology. 
In this section we explain that it associates a model $IE_\dR^*$ of the Anderson dual cohomology theory $IE$ and its differential extension of the pair $\left(IE^*, \mathrm{ch}'\right)$ (Definition \ref{def_diffcoh}), where $\mathrm{ch}'$ is defined below.  

Set
\begin{align}\label{eq_def_NE}
    N_E^\bullet := \Hom(\pi_{-\bullet} E, \R). 
\end{align}

By the third arrow in \eqref{eq_exact_DE}, we have a canonical transformation of cohomology theories
    \begin{align}\label{eq_ch'}
        \mathrm{ch}' \colon (IE)^*(X) \to H^*(X; N_E^\bullet) \simeq \Hom(E_*(X), \R), 
    \end{align}
    For example if $E_n(\pt)$ is finitely generated for all $n$, we have an isomorphism $V_{IE}^\bullet \simeq N_E^\bullet$ and \eqref{eq_ch'} coincides with the Chern-Dold homomorphism. 

We have the pairing
    \begin{align*}
        \langle -, -\rangle \colon \Omega_*(X; V_\bullet^E) \otimes \Omega^{*}(X; N_E^\bullet) \to \R. 
    \end{align*}
    Using this we regard $\Omega_*(X; V_\bullet^E) \subset  \Hom_{\mathrm{conti}} (\Omega^{*}(X; N_E^\bullet), \R)$.

\begin{defn}[{$(\widehat{IE}_\dR)^*$ and $IE^*_\dR$ associated to $\widehat{E}_*$}]\label{def_diff_IE}
Let $E$ be a spectrum and assume that we are given a differential extension $(\widehat{E}_*, R_{E_*}, I_{E_*}, a_{E_*})$ of $E$-homology. 
\begin{enumerate}
    \item 
For a manifold $X$ and integer $n$, we set
\begin{align}
    (\widehat{IE}_\dR)^n(X) := \{(\omega, h) \}, 
\end{align}
where
\begin{itemize}
    \item $\omega \in \Omega^n_{\mathrm{clo}}(X; N_E^\bullet)$. 
    \item $h \colon \widehat{E}_{n-1}(X) \to \R/\Z$ is a group homomorphism. 
    \item $\omega$ and $h$ fits into the following commutative diagram. 
    \begin{align}\label{diag_compatibility}
        \xymatrix{
        \Omega_n(X; V_\bullet^E) / \mathrm{im} (\del) \ar[rd]^-{a_{E_*}} \ar[rr]^-{(\mathrm{mod} \Z) \circ \langle -,  \omega \rangle}&& \R/\Z \\
        &\widehat{E}_{n-1}(X) \ar[ru]^-{h} &
        }. 
    \end{align}
\end{itemize}
\item
We define
\begin{align}
    a_{IE^*} \colon \Omega^{n-1}(X; N_E^\bullet) / \mathrm{im} (d) &\to (\widehat{IE}_\dR)^n(X), \quad
    \alpha \mapsto \left( 0, (\mathrm{mod}\Z )\circ \langle -, \omega \rangle \right). \notag
\end{align}
We set
\begin{align}
    IE^n_\dR(X) := (\widehat{IE}_\dR)^n(X)/\mathrm{im} (a_{IE^*}). 
\end{align}

\end{enumerate}
$(\widehat{IE}_\dR)^*$ and $IE^*_\dR$ are regarded as functors $\mathrm{Mfd}^{\mathrm{op}} \to \mathrm{Ab}^\Z$ in the obvious way. 
\end{defn}

\begin{defn}[{Structure maps for $(\widehat{IE}_{\mathrm{dR}})^*$ and $IE_{\mathrm{dR}}^*$}]\label{def_str_map_IE_dR}
We define the following maps natural in $X$. 
The well-definedness is easily checked. 
\begin{itemize}
    \item We denote the quotient map by
\begin{align*}
    I_{IE^*} \colon (\widehat{IE}_{\mathrm{dR}})^*(X) \to  IE_{\mathrm{dR}}^*(X). 
\end{align*}
\item We define
\begin{align*}
    R_{IE^*} \colon (\widehat{IE}_{\mathrm{dR}})^*(X) \to \Omega_{\mathrm{clo}}^*(X; N_E^\bullet) , \quad
    (\omega, h) \mapsto \omega. 
\end{align*}
\item We define
\begin{align*}
    \mathrm{ch}' \colon IE_{\mathrm{dR}}^*(X) \to H^n(X; N_E^\bullet) \left(\simeq \mathrm{Hom}(E_{n}(X), \R)\right), \quad
    I_{IE^*}((\omega, h)) \mapsto \mathrm{Rham}(\omega). 
\end{align*}
\item We define
\begin{align*}
    p \colon \mathrm{Hom}(E_{*-1}(X), \R/\Z) \to IE_{\mathrm{dR}}^*(X), \quad 
    h \mapsto I_{IE^*}((0, h\circ I_{E_*})). 
\end{align*}
\end{itemize}

\end{defn}

\subsection{Examples}\label{subsec_IE_ex}

Here we list some examples. 
We apply the construction in the last subsection to each of the examples of differential homology theories in Subsection \ref{subsec_ex_diffhom}. 
We will see that they recover the existing Cheeger-Simons type models of differential cohomology theories. 

\subsubsection{The differential ordinary cohomology in terms of differential characters by \cite{CheegerSimonsDiffChar}}\label{subsubsec_diffchar}
The ordinary integral cohomology is Anderson self-dual, $H\Z \simeq IH\Z$. 
Thus the above construction produces a differential extension of $H\Z$. 
Indeed, it is easy to verify that the above construction applied to $\widehat{H}^\HS(-; \Z)$ in Subsection \ref{subsubsec_diff_HZ_homology} recovers the Cheeger-Simons' model of differential ordinary cohomology in terms of differential characters.  

\subsubsection{The differential $K$-theory in terms of ``differential characters in $K$-theory'' by \cite{BenameurMaghfoul2006}}\label{subsubsec_Kdiffchar}
The $K$-theory is also Anderson self-dual, $K \simeq IK$. 
Thus the above construction applied to the geometric-cycle model of differential $K$-homology by \cite{ElmrabtyDiffKhom} mentioned in Subsection \ref{subsubsec_diff_HZ_homology} produces a differential $K$-theory in terms of functions which assign an $\R/\Z$-value to each differential $K$-cycle. 
Indeed, this recovers the ``differential characters in $K$-theory'' by Benameur and Maghfoul \cite{BenameurMaghfoul2006}. 

A typical element of $\widehat{IK}_\dR^0(X)$ in this model comes from a hermitian vector bundle $(F, \nabla^F)$ with a unitary connection over $X$. 
Indeed, we get a pair $\left(h_{(E, \nabla^E)}, \mathrm{Ch}(\nabla^E)\right) \in \widehat{IK}_\dR^0(X)$, where $h_{(E, \nabla^E)} \colon \widehat{K}_{-1}(X) \to \R/\Z$ is defined by
\begin{align}\label{eq_diffKelem}
    h_{(E, \nabla^E)}[(M, \nabla^M), (E, \nabla^E), f, \psi] := \overline{\eta}\left( D_{E \otimes f^*F} \right) + \langle f_* \psi, \mathrm{Ch}(\nabla^F)\rangle \pmod \Z. 
\end{align}
Here $D_{E \otimes f^*F}$ denotes the Spin$^c$ Dirac operator on $M$ twisted by $E \otimes f^*F$ with the connection $\nabla^E \otimes f^*\nabla^F$. 
$\overline{\eta}(D) = \frac{1}{2}(\eta(D) + \dim \ker D)$ is the reduced eta invariant. 
The well-definedness of the map \eqref{eq_diffKelem} uses the Atiyah-Patodi-Singer's index theorem. 
Indeed, to be compatible with the bordism relation \eqref{eq_bordism_Kcyc} of differential $K$-cycles, we need 
\begin{align}
    \int_W \mathrm{Todd}(\nabla^W) \mathrm{Ch}(\nabla^\mathcal{E}\otimes f^*\nabla^{F}) \equiv \overline{\eta} (D_{(\mathcal{E} \otimes f^*F)|_{\del W}}) \pmod \Z, 
\end{align}
which is a consequence of the APS index theorem. 

This element corresponds to the element $[E, \nabla^E, 0] \in \widehat{K}^0_\FL(X)$ in the Freed-Lott model (Subsection \ref{subsec_FL}). 
An easy generalization of the above construction gives the isomorphism $\widehat{K}^0_\FL(X) \simeq \widehat{IK}_\dR^0(X)$.

\subsubsection{The differential Anderson dual to $G$-bordism theories by \cite{YamashitaYonekura2021}}\label{subsubsec_IOmega}
Applying the construction for $\widehat{\Omega^G_*}$ in Subsection \ref{subsubsec_diff_bordism}, we recover the following model given in \cite{YamashitaYonekura2021}.
Recall that we are using the abbreviation $N_G^\bullet := N_{MTG}^\bullet$ \eqref{eq_inv_poly}. 


\begin{defn}[{$(\widehat{I\Omega^G_{\mathrm{dR}}})^*$ and $(I\Omega^G_{\mathrm{dR}})^*$, \cite{YamashitaYonekura2021}}]\label{def_hat_DOmegaG}
Let $n$ be a nonnegative integer. 
\begin{enumerate}
    \item 
Define $(\widehat{I\Omega^G_{\mathrm{dR}}})^n(X)$ to be an abelian group consisting of pairs $(\omega, h)$, such that
\begin{enumerate}
    \item $\omega$ is a closed $n$-form $\omega\in \Omega_{\mathrm{clo}}^n(X; N_G^\bullet)$. 
    \item $h$ is a group homomorphism
    $h \colon \mathcal{C}^{G_\nabla}_{n-1}(X) \to \R/\Z$. 
\item $\omega$ and $h$ satisfy the following compatibility condition. 
Assume that we are given two objects $(M_-, g_-, f_-)$ and $(M_+, g_+, f_+)$ in $h\Bord^{G_\nabla}_{n-1}(X)$ and a morphism $[W, g_W, f_W]$ from the former to the latter. 
Then we have
\begin{align}\label{eq_compatibility_dR}
    h([M_+, g_+, f_+]) - h([M_-, g_-, f_-]) = \mathrm{cw}(\omega)([W, g_W, f_W]) \pmod \Z, 
\end{align}
where the right hand side is defined in \eqref{eq_def_cw_omega_mor}. 
\end{enumerate}
Abelian group structure on $(\widehat{I\Omega^G_{\mathrm{dR}}})^n(X)$ is defined in the obvious way. 

\item
We define a homomorphsim of abelian groups, 
\begin{align}\label{eq_def_DOmega_a}
    a \colon \Omega^{n-1}(X; N_G^\bullet)/\mathrm{Im}(d) &\to  (\widehat{I\Omega^G_{\mathrm{dR}}})^n(X) \\
    \alpha &\mapsto (d\alpha, \mathrm{cw}(\alpha)). \notag
\end{align}
Here the homomorphism $\mathrm{cw}(\alpha) \colon \mathcal{C}^{G_\nabla}_{ n-1}(X) \to \R/\Z$ is defined by 
\begin{align}\label{eq_def_cw_alpha}
    \mathrm{cw}(\alpha) ([M, g, f]) := \int_M \mathrm{cw}_g(f^*\alpha) \pmod \Z. 
\end{align}
We set
\begin{align*}
    (I\Omega^G_{\mathrm{dR}})^n(X) := (\widehat{I\Omega^G_{\mathrm{dR}}})^n(X)/ \mathrm{Im}(a). 
\end{align*}

\end{enumerate}
\end{defn}

\subsection{The proof of $IE \simeq IE_\dR$}\label{subsec_proof_IE}

The goal of the rest of the section is to prove that the functor $IE_\dR^*$ is actually the model of the Anderson dual cohomology $IE$ to $E$. 
First we show that $IE_\dR$ fits into the exact sequence for the Anderson dual. 

\begin{prop}\label{prop_exact_IE_dR}
For any manifold $X$ and integer $n$, the following sequence is exact.  
    \begin{align}\label{eq_prop_exact_IE_dR}
         \mathrm{Hom}(E_{n-1}(X), \R)\to \mathrm{Hom}(E_{n-1}(X), \R/\Z) \xrightarrow{p} (IE_{\mathrm{dR}})^n(X)\\ \xrightarrow{\mathrm{ch}'}
        \mathrm{Hom}(E_n(X), \R)
        \to \mathrm{Hom}(E_{n}(X), \R/\Z). \notag
    \end{align}
\end{prop}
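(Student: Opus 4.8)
The plan is to verify exactness at each of the three middle terms of \eqref{eq_prop_exact_IE_dR}, making repeated use of the exact sequence \eqref{eq_exact_diffhom} for the differential homology $\widehat{E}_*$ and of the definitions of the structure maps in Definition \ref{def_str_map_IE_dR}. Throughout, I will freely use the identification $H^n(X;N_E^\bullet)\simeq \Hom(E_n(X),\R)$ and the description of $(\widehat{IE}_\dR)^n(X)$ as pairs $(\omega,h)$ satisfying the compatibility \eqref{diag_compatibility}.

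First I would treat exactness at $\Hom(E_{n-1}(X),\R/\Z)$. The composite $\Hom(E_{n-1}(X),\R)\to\Hom(E_{n-1}(X),\R/\Z)\xrightarrow{p}(IE_\dR)^n(X)$ sends a real homomorphism $\phi$ to $I_{IE^*}\bigl((0,(\phi\bmod\Z)\circ I_{E_*})\bigr)$; since $(0,(\phi\bmod\Z)\circ I_{E_*})=a_{IE^*}(\text{something})$—more precisely, one checks that the pair $(0, (\phi\bmod\Z)\circ I_{E_*}\circ \text{(projection)})$ lies in the image of $a_{IE^*}$ by taking $\alpha$ a form representing $\phi$ under Chern-Dold—this composite is zero. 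Conversely, if $h\colon E_{n-1}(X)\to\R/\Z$ has $p(h)=0$, then $(0,h\circ I_{E_*})=a_{IE^*}(\alpha)$ for some $\alpha\in\Omega^{n-1}(X;N_E^\bullet)/\mathrm{im}(d)$ with $d\alpha=0$; the second component then shows $h\circ I_{E_*}$ factors through $\mathrm{Rham}(\alpha)\in H^{n-1}(X;N_E^\bullet)$, and by the surjectivity of $I_{E_*}$ (from \eqref{eq_exact_diffhom}) this forces $h$ itself to be the mod-$\Z$ reduction of a real homomorphism, i.e. $h\in\mathrm{im}$ of the first map.

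Next, exactness at $(IE_\dR)^n(X)$. For $h\in\Hom(E_{n-1}(X),\R/\Z)$, $\mathrm{ch}'(p(h))=\mathrm{Rham}(0)=0$, so $\mathrm{ch}'\circ p=0$. Conversely suppose $(\omega,h)$ represents a class with $\mathrm{ch}'=\mathrm{Rham}(\omega)=0$ in $H^n(X;N_E^\bullet)$. Then $\omega=d\alpha$ for some $\alpha\in\Omega^{n-1}(X;N_E^\bullet)$; subtracting $a_{IE^*}(\alpha)$ we may assume $\omega=0$. Now the compatibility diagram \eqref{diag_compatibility} with $\omega=0$ says exactly that $h\colon\widehat{E}_{n-1}(X)\to\R/\Z$ kills the image of $a_{E_*}$, hence by the exactness of \eqref{eq_exact_diffhom} it factors through $I_{E_*}\colon\widehat{E}_{n-1}(X)\to E_{n-1}(X)$, i.e. $h=\bar h\circ I_{E_*}$ for some $\bar h\colon E_{n-1}(X)\to\R/\Z$; thus $(0,h)=(0,\bar h\circ I_{E_*})$ and the class is $p(\bar h)$.

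Finally, exactness at $\Hom(E_n(X),\R)$. The composite $(IE_\dR)^n(X)\xrightarrow{\mathrm{ch}'}\Hom(E_n(X),\R)\to\Hom(E_n(X),\R/\Z)$ lands in the reductions of classes of the form $\mathrm{Rham}(\omega)$ paired against $E_n(X)$; one shows this mod-$\Z$ reduction vanishes using the compatibility \eqref{diag_compatibility} together with the surjectivity of $\mathrm{ch}\colon E_n(X)\to\ldots$ onto the de Rham image—concretely, a class in $E_n(X)$ has a $\widehat{E}_n$-lift whose curvature pairs with $\omega$ integrally by \eqref{diag_compatibility}. Conversely, given $\phi\colon E_n(X)\to\R$ whose reduction mod $\Z$ is zero, i.e. $\phi$ is already integer-valued — this is the crux — I must produce $(\omega,h)$ with $\mathrm{Rham}(\omega)$ pairing to $\phi$. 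Choose $\omega\in\Omega^n_\clo(X;N_E^\bullet)$ with $\mathrm{Rham}(\omega)$ corresponding to $\phi\otimes\R\in\Hom(E_n(X),\R)$ (possible since $\mathrm{Rham}$ is onto the relevant group after tensoring). The obstruction to completing this to a pair $(\omega,h)\in(\widehat{IE}_\dR)^n(X)$ is precisely to extend the homomorphism $\Omega_n(X;V^E_\bullet)/\mathrm{im}(\del)\to\R/\Z$, $c\mapsto\langle c,\omega\rangle\bmod\Z$, along $a_{E_*}$ to all of $\widehat{E}_{n-1}(X)$; using the exact sequence \eqref{eq_exact_diffhom}, the extension exists iff this homomorphism kills $\mathrm{im}(\mathrm{ch}\colon E_n(X)\to\Omega_n/\mathrm{im}\del)$, and that vanishing is exactly the statement that $\phi$ is integer-valued, which we assumed. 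So such $(\omega,h)$ exists, and $\mathrm{ch}'$ of its class is $\phi$.

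\emph{Main obstacle.} The delicate point is the last converse: showing that integrality of $\phi\in\Hom(E_n(X),\R)$ is not merely necessary but sufficient for building an $h$ on $\widehat{E}_{n-1}(X)$ extending the pairing with $\omega$. This is where one genuinely uses that \eqref{eq_exact_diffhom} is exact (so $\widehat{E}_{n-1}(X)$ is an extension of $E_{n-1}(X)$ by $\Omega_n/(\mathrm{im}\del+\mathrm{im}\,\mathrm{ch})$) together with the injectivity of $\R/\Z$ as an abelian group, which lets one extend homomorphisms into $\R/\Z$ once the relevant relations are satisfied. Care is also needed in matching conventions: that $N_E^\bullet$ and $V^E_\bullet$ pair correctly and that $\mathrm{ch}'$ of \eqref{eq_ch'} really agrees with the map induced by $\mathrm{Rham}\circ R_{IE^*}$, so that the sequence \eqref{eq_prop_exact_IE_dR} is the one arising from \eqref{eq_exact_DE}.
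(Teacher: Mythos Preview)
Your argument is essentially correct and is the natural direct verification; the paper itself does not spell out a proof here but simply defers to \cite[Proposition~4.25]{YamashitaYonekura2021}, whose argument runs along the same lines.

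One small imprecision worth fixing: in the ``composite is zero'' part of exactness at $\Hom(E_n(X),\R)$, you appeal to a lift to $\widehat{E}_n(X)$, but the compatibility diagram \eqref{diag_compatibility} concerns $a_{E_*}\colon \Omega_n(X;V^E_\bullet)/\mathrm{im}(\partial)\to\widehat{E}_{n-1}(X)$, not $\widehat{E}_n(X)$. The clean version is: for $e\in E_n(X)$, exactness of \eqref{eq_exact_diffhom} gives $a_{E_*}(\mathrm{ch}(e))=0$ in $\widehat{E}_{n-1}(X)$, and then \eqref{diag_compatibility} yields $\langle \mathrm{ch}(e),\omega\rangle \equiv h\bigl(a_{E_*}(\mathrm{ch}(e))\bigr)=h(0)=0\pmod\Z$, which is exactly integrality of $\mathrm{ch}'([\omega,h])$. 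This is clearly what you intended. The remaining steps, including the use of divisibility of $\R/\Z$ to extend along $a_{E_*}$ in the final converse, are correct as written.
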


\begin{proof}
The proof is analogous to that for \cite[Proposition 4.25]{YamashitaYonekura2021}. 
\end{proof}

\begin{thm}\label{thm_IE_dR=IE}
There is a natural isomorphism of the functors $\mathrm{Mfd}^{\mathrm{op}} \to \mathrm{Ab}^\Z$, 
\begin{align*}
    F \colon IE_\dR \simeq IE, 
\end{align*}
which fits into the following commutative diagram. 
\begin{align}\label{diag_mainthm_IE}
    \xymatrix{
 0 \ar[r]& \mathrm{Ext}(E_{n-1}(-), \Z) \ar[r]^-p \ar@{=}[d] & IE_{\mathrm{dR}}^n \ar[r]^-{\mathrm{ch}} \ar[d]_-{\simeq}^-{F} & \mathrm{Hom}(E_n(-), \Z) \ar[r]\ar@{=}[d] & 0 \\
 0 \ar[r]& \mathrm{Ext}(E_{n-1}(-), \Z) \ar[r] & IE^n \ar[r] & \mathrm{Hom}(E_n(-), \Z) \ar[r] & 0
}
\end{align}
Here the bottom row is the exact sequence in \eqref{eq_exact_DE}.  
Moreover, the quintuple $((\widehat{IE}_\dR)^*, R, I, a)$ in Definitions \ref{def_diff_IE} is a differential extension of the pair $\left((IE)^*, \mathrm{ch}\right)$. 
\end{thm}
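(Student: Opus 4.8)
The plan is to obtain $F$ from the short five lemma and then read off the Bunke--Schick axioms of Definition~\ref{def_diffcoh}. First I would turn the five-term sequence of Proposition~\ref{prop_exact_IE_dR} into a short exact one: applying $\mathrm{Hom}(E_k(X),-)$ to $0\to\Z\to\R\to\R/\Z\to 0$ and using $\mathrm{Ext}^1_\Z(-,\R)=0$ identifies the cokernel of $\mathrm{Hom}(E_{n-1}(X),\R)\to\mathrm{Hom}(E_{n-1}(X),\R/\Z)$ with $\mathrm{Ext}(E_{n-1}(X),\Z)$, and the kernel of $\mathrm{Hom}(E_n(X),\R)\to\mathrm{Hom}(E_n(X),\R/\Z)$ with $\mathrm{Hom}(E_n(X),\Z)$, so that \eqref{eq_prop_exact_IE_dR} collapses to
\begin{align*}
0\to \mathrm{Ext}(E_{n-1}(X),\Z)\xrightarrow{\ p\ } IE^n_{\dR}(X)\xrightarrow{\ \mathrm{ch}'\ }\mathrm{Hom}(E_n(X),\Z)\to 0,
\end{align*}
which is the top row of \eqref{diag_mainthm_IE}; in particular $\mathrm{ch}'$ is integral, a fact I will reuse. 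The bottom row of \eqref{diag_mainthm_IE} is \eqref{eq_exact_DE}. Both sequences are natural in $X$ and have the same outer terms, so it suffices to construct a natural transformation $F\colon IE^*_{\dR}\to IE^*$ inducing the identity on $\mathrm{Ext}(E_{n-1}(-),\Z)$ and on $\mathrm{Hom}(E_n(-),\Z)$; the short five lemma then gives that $F$ is a natural isomorphism and forces \eqref{diag_mainthm_IE} to commute.

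The construction of $F$ is the heart of the matter, and it follows the pattern of \cite{YamashitaYonekura2021} for $E=MTG$. Using $IE=F(E,I\Z)$ with $I\Z=\mathrm{fib}\big(H\R\to I(\R/\Z)\big)$, and that $H\R$ and $I(\R/\Z)$ represent $Y\mapsto\mathrm{Hom}(\pi_*Y,\R)$ and $Y\mapsto\mathrm{Hom}(\pi_*Y,\R/\Z)$ on spectra, an element of $IE^n(X)=[\Sigma^\infty_+X\wedge E,\Sigma^n I\Z]$ is encoded by a class in $\mathrm{Hom}(E_n(X),\R)$ together with a nullhomotopy of its mod-$\Z$ reduction in $\mathrm{Hom}(E_n(X),\R/\Z)$. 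Given a representative $(\omega,h)\in(\widehat{IE}_{\dR})^n(X)$, the class $\mathrm{Rham}(\omega)\in H^n(X;N_E^\bullet)\cong\mathrm{Hom}(E_n(X),\R)$ supplies the first datum; by the integrality of $\mathrm{ch}'$ just noted it is $\Z$-valued, so its mod-$\Z$ reduction is nullhomotopic, and $h$, together with the compatibility diagram \eqref{diag_compatibility} with $\omega$ and the surjectivity of $I_{E_*}\colon\widehat E_{n-1}(X)\twoheadrightarrow E_{n-1}(X)$ from \eqref{eq_exact_diffhom}, supplies the required nullhomotopy; this defines $F(\omega,h)\in IE^n(X)$. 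I would then check three things: that $F(\omega,h)$ depends only on the class in $IE^n_{\dR}(X)$, i.e.\ that it annihilates $\mathrm{im}(a_{IE^*})$, which reduces to the pair $(d\alpha,(\mathrm{mod}\,\Z)\circ\langle R_{E_*}(-),\alpha\rangle)$ representing the zero map of spectra; that on the image of $p$, where $\omega=0$ and $h$ factors through $I_{E_*}$ by \eqref{diag_compatibility}, $F$ agrees with the map of \eqref{eq_exact_DE}, hence induces the identity on $\mathrm{Ext}$; and that $\mathrm{ch}'\circ F=\mathrm{ch}'$, which is immediate. I expect the first of these --- passing from the differential-homology datum $h$ to an honest map of spectra and verifying independence of the representative $(\omega,h)$ --- to be the main obstacle, and it is precisely where the axioms for $\widehat E_*$ (Definition~\ref{def_diffhom}) are used essentially; everything else is formal, and naturality in $X$ follows from functoriality of all the ingredients.

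For the last assertion I would verify the axioms of Definition~\ref{def_diffcoh} for the quadruple $\big((\widehat{IE}_{\dR})^*,\,R_{IE^*},\,I:=F\circ I_{IE^*},\,a_{IE^*}\big)$ with coefficients $N^\bullet=N_E^\bullet$ and transformation $\mathrm{ch}'$ of \eqref{eq_ch'}. The relations $R_{IE^*}\circ a_{IE^*}=d$ and $\mathrm{ch}'\circ I=\mathrm{Rham}\circ R_{IE^*}$ are read off directly from Definitions~\ref{def_diff_IE} and~\ref{def_str_map_IE_dR}. In the exact sequence $IE^{n-1}(X)\xrightarrow{\mathrm{ch}'}\Omega^{n-1}(X;N_E^\bullet)/\mathrm{im}(d)\xrightarrow{a_{IE^*}}(\widehat{IE}_{\dR})^n(X)\xrightarrow{I}IE^n(X)\to 0$, surjectivity of $I$ and $\ker I=\mathrm{im}(a_{IE^*})$ hold because $I_{IE^*}$ is by construction the quotient map onto $IE^n_{\dR}(X)$ and $F$ is an isomorphism; exactness at $\Omega^{n-1}(X;N_E^\bullet)/\mathrm{im}(d)$ is the statement that $a_{IE^*}(\alpha)=0$ forces $\alpha$ to be closed with $x\mapsto\langle R_{E_*}(x),\alpha\rangle$ taking values in $\Z$, which under the de Rham isomorphism $\Omega^{n-1}_{\clo}(X;N_E^\bullet)/\mathrm{im}(d)\cong H^{n-1}(X;N_E^\bullet)\cong\mathrm{Hom}(E_{n-1}(X),\R)$ (using again that $I_{E_*}$ is onto) is exactly the condition of lying in the image of $\mathrm{Hom}(E_{n-1}(X),\Z)$, hence in $\mathrm{im}(\mathrm{ch}')$ by \eqref{eq_exact_DE}. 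This completes the proof.
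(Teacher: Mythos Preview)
Your overall strategy---collapse Proposition~\ref{prop_exact_IE_dR} to a short exact sequence, build a natural $F$ compatible with the outer maps, apply the five lemma, then verify the Bunke--Schick axioms---is exactly the paper's. The final paragraph on the differential-extension axioms is fine and is what the paper dismisses as ``easily checked''.

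The gap is precisely where you flag the ``main obstacle'': the sentence ``$h$ \ldots\ supplies the required nullhomotopy'' is not yet an argument, and the mechanism you hint at (surjectivity of $I_{E_*}$) does not work. The homomorphism $h$ lives on $\widehat E_{n-1}(X)$ and does \emph{not} descend along $I_{E_*}$: by \eqref{diag_compatibility} its restriction to $\ker I_{E_*}=\mathrm{im}\,a_{E_*}$ equals $(\mathrm{mod}\,\Z)\circ\langle-,\omega\rangle$, which is typically nonzero, so you cannot simply read off an element of (a torsor over) $\mathrm{Hom}(E_{n-1}(X),\R/\Z)$. What $(\omega,h)$ honestly produces is a functor of Picard groupoids
\[
\widetilde F_{(\omega,h)}\colon\bigl(\Omega_n(X;V_\bullet^E)/\mathrm{im}\,\partial\xrightarrow{\,a_{E_*}\,}\widehat E_{n-1}(X)\bigr)\longrightarrow(\R\to\R/\Z),
\]
and to land in $IE^n(X)\simeq\pi_0\mathrm{Fun}_{\mathrm{Pic}}\bigl(\pi_{\le1}L(X^+\wedge E)_{1-n},\,(\R\to\R/\Z)\bigr)$ (the Hopkins--Singer model, which is the precise form of your fibre-sequence description) one still needs a comparison functor from the fundamental groupoid of the spectrum to the differential-homology groupoid. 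The paper supplies this missing piece: the target groupoid is equivalent to $(\ker a_{E_*}\xrightarrow{0}\mathrm{coker}\,a_{E_*})$ with vanishing $k$-invariant because $\ker a_{E_*}\subset H_n(X;V^E_\bullet)$ is torsion-free, and a spectrum-level map $X^+\wedge E\to J$ is constructed, with $J$ the cofibre of $\Sigma^{-1}\bigl((X^+\wedge E\wedge S\R/\Z)\langle n\rangle\bigr)\to X^+\wedge E$, whose effect on $\pi_{\le1}$ is the identity on $\pi_0$ and $\mathrm{ch}$ on $\pi_1$. This cofibre construction, which absorbs the possibly nontrivial $k$-invariant on the spectrum side, is the idea your sketch is missing.
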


\begin{proof}
The proof is essentially the same as the corresponding claim for the case $E = MTG$ given in \cite[Theorem 4.51]{YamashitaYonekura2021}. 
We use the model of $IE^*$ in \cite[Corollary B.17]{HopkinsSinger2005} in terms of functors of Picard groupoids.

Given an element $(\omega, h) \in (\widehat{IE}_{\mathrm{dR}})^n(X)$, we get the associated functor of Picard groupoids, 
\begin{align}\label{eq_associated_functor}
        \widetilde{F}_{(\omega, h)} \colon \left(\Omega_n(X; V_\bullet^E) / \mathrm{im} (\del) \xrightarrow{a_{E_*}} \widehat{E}_{n-1}(X) \right) \to (\R \xrightarrow{\mathrm{mod} \Z} \R/\Z)
    \end{align}
by applying $h$ on objects and $\omega$ on morphisms. 
This is well-defined thanks to the commutativity of the diagram \eqref{diag_compatibility}. 
Moreover, given two elements $(\omega, h)$ and $(\omega', h')$, and an element $\alpha \in \Omega^{n-1}(X; (V_\bullet^E)^\lor)/\mathrm{Im}(d)$ so that $(\omega', h') - (\omega, h) = a_{IE^*}(\alpha)$, we get the associated natural transformation, 
\begin{align}\label{eq_associated_transformation}
    \widetilde{F}_\alpha \colon \widetilde{F}_{(\omega, h)} \Rightarrow \widetilde{F}_{(\omega', h')}, 
\end{align}
by $\widetilde{F}_\alpha  := \langle R_{E_*}(-), \alpha\rangle$. 
Summarizing, we have defined the homomorphism which is functorial in $X$,
\begin{align}\label{eq_functor_IE_dR}
    \widetilde{F}_X \colon IE_\dR^n(X) \to \pi_0  \mathrm{Fun}_{\mathrm{Pic}} \left(\left(\Omega_n(X; V_\bullet^E) / \mathrm{im} (\del) \xrightarrow{a_{E_*}} \widehat{E}_{n-1}(X) \right) , (\R \to \R/\Z) \right), 
\end{align}
where $\pi_0  \mathrm{Fun}_{\mathrm{Pic}}$ denotes the group of natural isomorphism classes of functors of Picard groupoids. 

Now by \cite[Corollary B.17]{HopkinsSinger2005} we have an isomorphism
\begin{align}\label{eq_proof_main_IE_0}
    IE^n(X) \simeq \pi_0\mathrm{Fun}_{\mathrm{Pic}} (\pi_{\le 1}(L(X^+\wedge E)_{1-n}), (\R \to \R/\Z)). 
\end{align}
Here we set $L$ is the spectrification and $\pi_{\le 1}$ is the fundamental Picard groupoid. 
We have $\pi_i(L(X^+\wedge E)_{1-n}) = E_{n-1+i}(X)$, and the exact sequence in the bottom row of \eqref{diag_mainthm_IE} becomes the canonical ones in this model (see the explanation in \cite[Fact 2.6]{YamashitaYonekura2021}). 

As mentioned in \cite[Example B.6]{HopkinsSinger2005}, the Picard groupoids $\mathcal{C}$ coming from maps of Abelian groups are equivalent to $(\pi_1(\mathcal{C}) \xrightarrow{0} \pi_0(\mathcal{C}))$, so we have an equivalence
\begin{align}\label{eq_proof_main_IE_1}
    \left(\Omega_n(X; V_\bullet^E) / \mathrm{im} (\del) \xrightarrow{a_{E_*}} \widehat{E}_{n-1}(X) \right)
    \simeq \left( \ker a_{E_*} \xrightarrow{0} \mathrm{coker} \ a_{E_*}
    \right),
\end{align}
and we have canonical isomorphisms 
\begin{align}
    \ker a_{E_*} &\simeq \mathrm{im}\left(E_n(X) \xrightarrow{\mathrm{ch}} H_n(X; V_\bullet^E)
    \right), \\
    \mathrm{coker}\ a_{E_*} &\simeq E_{n-1}(X), 
\end{align}
by the exactness of \eqref{eq_exact_diffhom}. 

Now we construct a (natural isomorphism class of) functor of Picard groupoids
\begin{align}\label{eq_proof_IE_3}
    \pi_{\le 1}(L(X^+\wedge E)_{1-n}) \to \left( \ker a_{E_*} \xrightarrow{0} \mathrm{coker} \ a_{E_*}
    \right)
\end{align}
as follows. 
Let $S\R/\Z$ denote the Moore spectrum for $\R/\Z$, so that we have $\pi_i(X^+ \wedge E \wedge S\R/\Z) = (E_{\R/\Z})_i(X)$. 
Let $(X^+ \wedge E \wedge S\R/\Z )\langle n \rangle \to X^+ \wedge E \wedge S\R/\Z$ denote the $n$-connected cover. 
Composing it with the map $\Sigma^{-1}(X^+ \wedge E \wedge S\R/\Z) \to X^+ \wedge E$ coming from the extension of coefficient group $0 \to \Z \to \R \to \R/\Z \to 0$, 
we get a morphism of spectra, 
\begin{align}\label{eq_proof_IE_4}
    \Sigma^{-1}\left((X^+ \wedge E \wedge S\R/\Z )\langle n \rangle \right) \to X^+ \wedge E. 
\end{align}
Let $J$ be the homotopy cofiber of the map \eqref{eq_proof_IE_4}. 
We have $\pi_{n}(J) \simeq H_n(X; V^E_\bullet)$ and $\pi_{n-1}(J) \simeq  E_{n-1}(X)$. 
Since $\pi_n(J)$ is torsion-free, the $k$-invariant for the Picard groupoid $\pi_{\le 1}(LJ_{1-n})$ vanishes. 
Thus we have
\begin{align}
    \pi_{\le 1}(LJ_{1-n}) \simeq \left(H_n(X; V^E_\bullet) \xrightarrow{0} E_{n-1}(X) \right).  
\end{align}
Compose this equivalence with the functor of fundamental Picard groupoids associated to the map $X^+ \wedge E \to J$. 
Then the image is contained in the subgroupoid $\left( \ker a_{E_*} \xrightarrow{0} \mathrm{coker} \ a_{E_*}
    \right)$. So we get the desired functor \eqref{eq_proof_IE_3}. 
By construction, the functor \eqref{eq_proof_IE_3} induces identity on $\pi_0$ and $\mathrm{ch}$ on $\pi_1$.

By composing \eqref{eq_functor_IE_dR} with the pre-composition of the functor \eqref{eq_proof_IE_3} and equivalence \eqref{eq_proof_main_IE_1}, we get the desired functor $F$ as
\begin{align}
    F_X \colon IE_\dR^n(X) \to \pi_0  \mathrm{Fun}_{\mathrm{Pic}} \left(\pi_{\le 1}(L(X^+\wedge E)_{1-n}) , (\R \to \R/\Z) \right) \xrightarrow[\simeq]{\eqref{eq_proof_main_IE_0}} IE^n(X). 
\end{align}

The commutativity of \eqref{diag_mainthm_IE} is obvious. 
Applying that diagram to any manifold $X$, we know that the top row is exact by Proposition \ref{prop_exact_IE_dR}, and bottom row is also exact. 
Thus by the five lemma we see that $F$ is a natural isomorphism. 
The last statement is easily checked. 
This completes the proof of Theorem \ref{thm_IE_dR=IE}. 

\end{proof}

\section{The interpretation of $\widehat{IE}_\dR$ in terms of QFTs}\label{sec_interpretation}

In this section we explain the interpretation of our model $\widehat{IE}_\dR$ in terms of QFTs. 

As explained in Subsection \ref{subsec_freedhopkins}, the Anderson dual to the $G$-bordism theory is supposed to classify deformation classes of possibly non-topological invertible QFTs on $G$-manifolds. 
Indeed, the $\R/\Z$-valued function $h$ of an element $(h, \omega) \in (\widehat{I\Omega^G_\dR})^{n+1}(X)$ can be regarded as the complex phase of the partition functions of 
an invertible QFTs for manifolds equipped with differential stable $G$-structures and smooth maps to $X$. 
The forgetful map $I \colon (\widehat{I\Omega^G_\dR})^{n+1}(X)\to (I\Omega^G_\dR)^{n+1}(X)$ is regarded as taking the deformation classes of such invertible QFTs. 

This interpretation is based on the following empirical facts known for physically meaningful\footnote{
Note that the physically meaningfulness includes conditions such as reflection positivity, locality and Wick-rotated unitarity. 
} invertible QFTs. 
Generally in a non-topological $n$-dimensional QFT $\mathcal{T}$, the partition function $Z_\mathcal{T}(M^n, \mathfrak{s}) \in \C$ for closed $n$-dimensional $\mathcal{S}$-manifold varies smoothly according to the smooth variation of the input $(M^n, \mathfrak{s})$. 
It is empirically known that, if $\mathcal{T}$ is {\it invertible}, the variation of the complex phase of $Z_\mathcal{T}$ can be measured by an integration of some characteristic form, i.e., there exists a $(d+1)$-dimensional characteristic polynomial $\omega_{\mathcal{T}}$ such that, for each bordism $(W^{d+1}, \mathfrak{s}) \colon (M_-, \mathfrak{s}_-) \to (M_+, \mathfrak{s}_+) $ we have
\begin{align}\label{eq_empirical1}
    \arg \left( \frac{Z_\mathcal{T}(M^d_+, \mathfrak{s}_+)}{Z_\mathcal{T}(M^d_-, \mathfrak{s}_-)} \right) = \int_W \cw(\omega_{\mathcal{T}})(W^{d+1}, \mathfrak{s}) \pmod \Z, 
\end{align}
where the left hand side is well-defined since the partition function is nonzero in the invertible case. 
Note that a priori there is nothing to assign for $(d+1)$-dimensional bordism, since $\mathcal{T}$ is $d$-dimensional theory. 

Moreoveer, it is also empirically known that the effect of 
smooth deformation of invertible theories can also be measured by local integrations. A smooth deformation $\mathcal{H}$ from $\mathcal{T}_0$ to $\mathcal{T}_1$ provides us a $d$-dimensional characteristic form $\alpha_\mathcal{H}$ so that
\begin{align}\label{eq_empirical2}
    \arg \left( \frac{Z_{\mathcal{T}_1}(M^d, \mathfrak{s})}{Z_{\mathcal{T}_0}(M^d, \mathfrak{s})} \right) = \int_M \cw(\alpha_\mathcal{H})(M^d, \mathfrak{s}) \pmod \Z.
\end{align}

These empirical facts explain our interpretation of our model. 
Indeed, recall that $\Omega^*(X; N_G^\bullet)$ is the differential forms on $X$ with coefficient in invariant polynomials on $\mathfrak{g}$, which can be regarded as characteristic polynomials for the structure $\mathcal{S}$ given by differential stable tangential $G$-structures and maps to $X$. 
The equation \eqref{eq_empirical1} corresponds to the compatibility condition \eqref{diag_compatibility} for $(\arg Z_\mathcal{T}, \omega_{\mathcal{T}})$, and the equation \eqref{eq_empirical2} says that the deformation corresponds to the addition by $a(\alpha_{\mathcal{H}})$ in \eqref{eq_def_DOmega_a}. 

Finally we comment on the possibility of interpreting the general construction of differential Anderson dual to differential homology $\widehat{E}_*$ also in terms of a kind of invertible QFTs. 
The idea is the following. 
Differential homology $\widehat{E}_*$ typically comes from a Picard groupoid $\mathcal{C}_\nabla$ with differential data. For example $h\mathrm{Bord}^{G_\nabla}_n(X)$, the groupoid of differential $K$-cycles (which has an interpretation in terms of {\it D-branes}), and the groupoid of differential ordinary cocycles. 
On morphisms of these categories, we can integrate differential forms. 
Differential Anderson dual $(\widehat{IE}_\dR)^*(X)$ classifies functors 
   \begin{align}
      (\omega, h) \colon \mathcal{C}_\nabla(X) \to (\R \to \R/\Z)
   \end{align}
which reflects the differential information. 
The map $h \colon \mathrm{Obj}(\mathcal{C}_\nabla) \to \R/\Z$ may be regarded as the phase of a partition function for some invertible QFTs with domain extending $\mathcal{C}_\nabla$.

   \bibliographystyle{ytamsalpha}
\bibliography{QFT}

\end{document}